\documentclass[11pt]{article}

\usepackage[a4paper,margin=1in]{geometry}
\usepackage{amsmath,amssymb,amsthm,mathtools}
\usepackage{enumitem}
\usepackage{microtype}
\usepackage{hyperref}
\usepackage{tikz}
\usepackage{tikz-cd}

\hypersetup{
    colorlinks=true,
    linkcolor=blue,
    citecolor=blue,
    urlcolor=blue
}

\newtheorem{theorem}{Theorem}[section]
\newtheorem{proposition}[theorem]{Proposition}
\newtheorem{lemma}[theorem]{Lemma}
\newtheorem{corollary}[theorem]{Corollary}

\theoremstyle{definition}

\theoremstyle{remark}
\newtheorem{remark}[theorem]{Remark}

\newcommand{\Dur}{\operatorname{Dur}}
\newcommand{\cH}{\mathcal{H}}
\newcommand{\Z}{\mathbb{Z}}

\title{\textbf{The range and omitted values of a certain sequence involving the partition function
}}

\author{
Kalyan Chakraborty\thanks{
Department of Mathematics,
SRM University-AP,
Neerukonda, Mangalagiri Mandal,
Guntur District,
Andhra Pradesh 522240, India.
Email: \texttt{kalyan.c@srmap.edu.in}.}
\and
Alisha Kazi\thanks{
Sister Nivedita University,
DG 1/2, New Town, Action Area 1,
Kolkata 700156, India.
Email: \texttt{alishakazimath@gmail.com}.}
\and
Manoj Upreti\thanks{
Department of Mathematics,
SRM University-AP,
Neerukonda, Mangalagiri Mandal,
Guntur District,
Andhra Pradesh 522240, India.
Email: \texttt{manoj.u@srmap.edu.in}.}
}

\date{}

\begin{document}

\maketitle


\begin{abstract}
Let \(p(n)\) denote the ordinary partition function. Motivated by
analogous questions concerning Euler's totient function and its
complementary counting function, we study the range of the
partition-derived sequence \(p(n)-n\). We give combinatorial
interpretations of this sequence and investigate both the attained
and omitted positive integers. We obtain exact and asymptotic
information about the gaps between consecutive attained values and
show that the range is remarkably sparse: its counting function has
order \((\log x)^2\), and consequently the range has natural density
zero. We also extend the discussion to partitions whose Durfee
square has side at least a fixed positive integer.
\end{abstract}

\medskip

\noindent
\textbf{Keywords.}
Integer partitions; partition function; hook partitions;
Durfee square; omitted values; finite differences;
asymptotic inversion.

\medskip

\noindent
\textbf{2020 Mathematics Subject Classification.}
11P81, 05A17.

\section{Introduction}

Let \(p(n)\) denote the number of unrestricted partitions of the
positive integer \(n\). Thus
\[
p(1)=1,\quad
p(2)=2,\quad
p(3)=3,\quad
p(4)=5,\quad
p(5)=7,\quad
p(6)=11,\ldots
\]
The partition function is one of the classical objects of additive
number theory and combinatorics; see, for example,
Andrews~\cite{Andrews} and Andrews--Eriksson~\cite{AndrewsEriksson}.

In this paper we investigate the sequence
\[
H(n):=p(n)-n.
\]
Its first values are
\[
0,0,0,1,2,5,8,14,21,32,45,\ldots,
\]
and the sequence appears in the OEIS as A000094
\cite{OEIS}.

Our initial motivation for studying this sequence comes from Euler's
totient function. Recall that \(\varphi(n)\) denotes the number of
positive integers not exceeding \(n\) that are relatively prime to
\(n\). Its complementary quantity
\[
n-\varphi(n)
\]
therefore counts the positive integers not exceeding \(n\) that are
not relatively prime to \(n\); this quantity is commonly referred to
as the cototient (or co-totient) function.

Questions concerning the values, ranges, and distribution of Euler's
totient function and related arithmetic functions have been studied
extensively. In particular, Luca and Pollack~\cite{LucaPollack}
consider questions concerning the values of Euler's totient function
and its range. Related questions involving the cototient function
and its interaction with other arithmetic functions have also been
studied by Luca and Pomerance~\cite{LucaPomeranceCototient}.

This arithmetic viewpoint provided the original motivation for the
present investigation. Just as \(\varphi(n)\) is naturally compared
with the ambient quantity \(n\), it is natural to ask what can be
learned by comparing the classical partition-counting function
\(p(n)\) with \(n\). This leads to the sequence \(H(n)=p(n)-n\)
defined above. Our principal interest is not merely in the size of
\(H(n)\), but in the arithmetic structure of its range: which
positive integers are attained, which are omitted, how large the
gaps between consecutive attained values become, and how frequently
attained values occur.

In the partition setting, the difference \(p(n)-n\) has the
additional advantage of admitting an immediate combinatorial
interpretation. A hook-shaped partition of \(n\) is a partition of
the form
\[
(k,1^{\,n-k}),\qquad 1\le k\le n.
\]
Equivalently, its Ferrers diagram has Durfee square of side one.
Hook partitions occur throughout the theory of Young diagrams and
standard Young tableaux; see, for example,
Pak~\cite{PakSYT}. Related uses of hook-shaped partitions also occur
in representation-theoretic settings; see Heim and
Luo~\cite{HeimLuo}.

There are exactly \(n\) hook partitions of \(n\), and consequently
\[
H(n)=p(n)-n
\]
counts precisely the non-hook partitions of \(n\), or equivalently
the partitions whose Durfee square has side at least two. Thus the
arithmetic motivation above leads to a sequence that also has a
natural partition-theoretic interpretation.

The same sequence occurs in the enumeration of unlabeled trees of
diameter four. The enumeration of trees according to height and
diameter goes back at least to Riordan~\cite{Riordan}; see also
Knopfmacher, Tichy, Wagner and Ziegler~\cite{KTWZ} for related
connections between graphs and partitions. Hence \(H(n)\) arises
naturally from several different points of view.

The classical asymptotic formula of Hardy and Ramanujan
\cite{HardyRamanujan} states that
\[
p(n)\sim
\frac{1}{4\sqrt3\,n}
\exp\left(\pi\sqrt{\frac{2n}{3}}\right).
\]
Rademacher~\cite{Rademacher} subsequently obtained an exact
convergent series for \(p(n)\). These analytic results describe the
size of \(p(n)\), and therefore of \(H(n)\), very accurately.

Our purpose here is somewhat different. We study the \emph{range}
\[
\cH^+:=\{H(n):n\ge4\}
\]
as a subset of the positive integers. More specifically, we ask:

\begin{itemize}[leftmargin=2em]
\item Which positive integers are attained by \(H(n)\)?
\item How large are the blocks of integers omitted between
      consecutive attained values?
\item How many attained values lie below a given real number \(x\)?
\item How sparse is the range inside the positive integers?
\end{itemize}

We first prove that \(H(n)\) is strictly increasing from \(n=3\)
onward. This allows the omitted positive integers to be decomposed
exactly into disjoint blocks. If
\[
G(n)=H(n+1)-H(n)-1,
\]
denotes the number of omitted integers between two consecutive
attained values, then
\[
G(n)=p(n+1)-p(n)-2.
\]
Thus the local geometry of the range is controlled by the first
difference of the partition function. Finite differences of
\(p(n)\) have been studied more generally by Gupta~\cite{Gupta}.

Using asymptotic information for shifted quotients of the partition
function, we obtain
\[
G(n)\sim
\frac{\pi}{\sqrt{6n}}p(n).
\]
Hence the omitted blocks grow without bound in absolute size, even
though their size relative to \(H(n)\) tends to zero.

Our main global object is the range-counting function
\[
I(x)
=
\#\{m\in\cH^+:m\le x\}.
\]
The central analytic problem is therefore an asymptotic inversion
problem. If
\[
H(N)\le x<H(N+1),
\]
then \(I(x)=N-3\), and one must invert the exponential growth of
\(H(N)\) in order to recover \(N\) from \(x\).

We prove the three-term asymptotic expansion
\[
\begin{aligned}
I(x)
={}&
\frac{3}{2\pi^2}(\log x)^2
+\frac{6}{\pi^2}(\log x)(\log\log x)\\
&+
\frac{3}{\pi^2}
\log\left(\frac{6\sqrt3}{\pi^2}\right)\log x
+O\bigl((\log\log x)^2\bigr).
\end{aligned}
\]
In particular,
\[
I(x)\sim\frac{3}{2\pi^2}(\log x)^2,
\]
and consequently the positive range of \(H\) has natural density
zero.

Finally, we consider a natural generalization involving Durfee
squares. For a fixed integer \(r\ge2\), let \(H_r(n)\) denote the
number of partitions of \(n\) whose Durfee square has side at least
\(r\). Then \(H_2(n)=H(n)\), and we show that, for every fixed \(r\),
the corresponding positive range has the same leading-order
sparsity:
\[
I_r(x)\sim\frac{3}{2\pi^2}(\log x)^2.
\]

\section{Combinatorial interpretations and generating functions}

We begin with the elementary combinatorial interpretation of
\(H(n)\).

\begin{proposition}
For every \(n\ge1\), \(H(n)=p(n)-n\) is the number of non-hook
partitions of \(n\).  Equivalently,
\[
H(n)
=
\#\{\lambda\vdash n:\Dur(\lambda)\ge2\}.
\]
\end{proposition}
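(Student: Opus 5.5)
The plan is to count the hook partitions of \(n\) exactly, show there are \(n\) of them, and subtract this count from \(p(n)\). Throughout, the Durfee square \(\Dur(\lambda)\) of a partition \(\lambda=(\lambda_1\ge\lambda_2\ge\cdots)\) is the largest \(d\) with \(\lambda_d\ge d\). Every nonempty partition of \(n\ge1\) has \(\lambda_1\ge1\), so \(\Dur(\lambda)\ge1\). Hence the partitions of \(n\) split into two disjoint classes: those with \(\Dur(\lambda)=1\) and those with \(\Dur(\lambda)\ge2\). It therefore suffices to show
\[
\#\{\lambda\vdash n:\Dur(\lambda)=1\}=n,
\]
and that these are exactly the hook partitions \((k,1^{\,n-k})\), \(1\le k\le n\).

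\textbf{Characterising \(\Dur(\lambda)=1\).} We have \(\Dur(\lambda)=1\) if and only if \(\lambda_2<2\), i.e.\ \(\lambda_2\le 1\), where \(\lambda_2=0\) if there is only one part. Thus every part after the first equals \(1\), and \(\lambda=(k,1^{\,n-k})\) with \(k=\lambda_1\in\{1,\dots,n\}\). Conversely, each such shape has \(\lambda_2\le1\), so \(\Dur(\lambda)=1\). The map \(\lambda\mapsto\lambda_1\) is then a bijection onto \(\{1,\dots,n\}\), because \(k\) determines the partition completely. This gives exactly \(n\) hook partitions, with no double counting. The only edge cases are \(k=n\), the single part \((n)\), and \(k=1\), the partition \((1^n)\); both fit the description.

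\textbf{Conclusion.} Subtracting, the number of partitions with \(\Dur(\lambda)\ge2\) is \(p(n)-n=H(n)\). Since these are precisely the non-hook partitions by the characterisation above, the first form of the statement follows as well.

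There is no real obstacle here. The only point needing care is to fix the convention that \(\Dur\) is computed via \(\lambda_d\ge d\), with missing parts treated as \(0\), so that the equivalence \(\Dur(\lambda)=1\iff\lambda_2\le1\) holds, including for one-part partitions.
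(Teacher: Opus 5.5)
Your proposal is correct and follows essentially the same route as the paper: identify the hooks as exactly the partitions $(k,1^{\,n-k})$ with $1\le k\le n$, which are the partitions with $\Dur(\lambda)=1$, count them as $n$, and subtract from $p(n)$. Your explicit verification that $\Dur(\lambda)=1$ if and only if $\lambda_2\le 1$ simply spells out the one-line assertion the paper makes about hooks and Durfee squares.
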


\begin{proof}
A hook partition of \(n\) has the form
\[
(k,1^{\,n-k}),\qquad 1\le k\le n.
\]
There are therefore exactly \(n\) hook partitions of \(n\).
Subtracting them from the \(p(n)\) unrestricted partitions gives
\[
H(n)=p(n)-n.
\]
A Ferrers diagram is a hook precisely when its Durfee square has
side one, proving the equivalent formulation.
\end{proof}

\begin{figure}[ht]
\centering
\begin{tikzpicture}[scale=0.42]

\foreach \x in {0,...,6}
  \draw (\x,4) rectangle ++(1,1);
\foreach \y in {0,...,3}
  \draw (0,\y) rectangle ++(1,1);

\node at (3.5,-1.0) {\small A hook partition};
\node at (3.5,-1.8) {\small \(\Dur(\lambda)=1\)};

\begin{scope}[xshift=12cm]
  \foreach \x in {0,...,2}
    \foreach \y in {2,...,4}
      \draw (\x,\y) rectangle ++(1,1);

  \foreach \x in {3,...,5}
    \draw (\x,4) rectangle ++(1,1);
  \foreach \x in {3,...,4}
    \draw (\x,3) rectangle ++(1,1);
  \draw (3,2) rectangle ++(1,1);

  \foreach \x in {0,...,2}
    \draw (\x,1) rectangle ++(1,1);
  \foreach \x in {0,...,1}
    \draw (\x,0) rectangle ++(1,1);

  \node at (3,-1.0) {\small Durfee-square decomposition};
  \node at (3,-1.8) {\small \(d=3\)};
\end{scope}

\end{tikzpicture}
\caption{A hook partition and a typical Durfee-square decomposition.}
\label{fig:durfee}
\end{figure}

The ordinary partition generating function is
\[
\sum_{n\ge0}p(n)q^n
=
\frac{1}{(q;q)_\infty},
\]
where
\[
(q;q)_\infty=\prod_{j\ge1}(1-q^j).
\]

Since
\[
\sum_{n\ge1}nq^n=\frac{q}{(1-q)^2},
\]
we obtain the following generating function.

\begin{proposition}
We have
\[
\sum_{n\ge1}H(n)q^n
=
\frac{1}{(q;q)_\infty}
-1-\frac{q}{(1-q)^2}.
\]
\end{proposition}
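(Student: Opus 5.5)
The plan is to subtract two known power series term by term. I will work with formal power series in \(q\); one could equally take \(|q|<1\), where every series and product below converges absolutely.

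First, by the definition \(H(n)=p(n)-n\),
\[
\sum_{n\ge1}H(n)q^n=\sum_{n\ge1}p(n)q^n-\sum_{n\ge1}nq^n .
\]
Splitting the sum is legitimate because both series on the right converge absolutely for \(|q|<1\); formally, it is just coefficientwise subtraction.

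Next I handle each piece separately.

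\begin{itemize}[leftmargin=2em]
\item For the partition series, the classical identity \(\sum_{n\ge0}p(n)q^n=1/(q;q)_\infty\) with \(p(0)=1\) gives
\[
\sum_{n\ge1}p(n)q^n=\frac{1}{(q;q)_\infty}-1 .
\]
\item For the linear series, I differentiate the geometric series \(\sum_{n\ge0}q^n=(1-q)^{-1}\) and multiply by \(q\). This yields
\[
\sum_{n\ge1}nq^n=\frac{q}{(1-q)^2},
\]
as recorded just before the statement.
\end{itemize}

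Substituting both expressions gives the claimed identity.

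There is no real obstacle here. The only point needing care is the constant term: removing \(p(0)=1\) accounts for the \(-1\), and the linear series has no constant term. As a check, the expansion agrees with the listed values \(0,0,0,1,2,\dots\). For example, the coefficient of \(q^4\) is \(p(4)-4=1\).
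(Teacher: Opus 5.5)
Your proof is correct and follows the paper's argument exactly: you split \(\sum_{n\ge1}H(n)q^n\) as \(\sum_{n\ge1}p(n)q^n-\sum_{n\ge1}nq^n\), remove the constant term \(p(0)=1\) from \(1/(q;q)_\infty\), and use \(\sum_{n\ge1}nq^n=q/(1-q)^2\). Your remarks on formal versus analytic convergence and the check at \(q^4\) are fine additions but not needed.
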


\begin{proof}
Using \(H(n)=p(n)-n\),
\[
\begin{aligned}
\sum_{n\ge1}H(n)q^n
&=
\sum_{n\ge1}p(n)q^n-\sum_{n\ge1}nq^n\\
&=
\left(\frac1{(q;q)_\infty}-1\right)
-\frac{q}{(1-q)^2}.
\end{aligned}
\]
\end{proof}

The interpretation in terms of non-hook partitions also explains
the appearance of this sequence in certain enumerations of
unlabeled trees of diameter four; see \cite{KTWZ,Riordan}.  We will
not need the tree interpretation in the proofs below, but it
provides an additional combinatorial realization of the same
sequence.

\section{Monotonicity and omitted blocks}

Our first goal is to understand the ordering of the attained values.

\begin{lemma}
For \(n\ge1\),
\[
p(n)-p(n-1)
\]
equals the number of partitions of \(n\) containing no part equal to
\(1\).
\end{lemma}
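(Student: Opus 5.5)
The plan is to give a bijective argument: partitions of \(n\) that contain at least one part equal to \(1\) correspond to partitions of \(n-1\). Everything then follows by subtracting.

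\emph{The bijection.} Let \(A_n\) be the set of partitions of \(n\) having at least one part equal to \(1\). Define \(\phi:A_n\to\{\lambda\vdash n-1\}\) by deleting one copy of the part \(1\). Define the inverse map by appending a part \(1\) to a partition of \(n-1\); the result always lies in \(A_n\). These two maps are mutually inverse, so \(\phi\) is a bijection.

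\emph{Boundary case.} For \(n=1\) we use the convention \(p(0)=1\), which is consistent with the generating function \(1/(q;q)_\infty\) already used above. Here the empty partition corresponds to the partition \((1)\), so the bijection still holds.

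\emph{Conclusion.} We get \(\#A_n=p(n-1)\). The partitions of \(n\) with no part equal to \(1\) are exactly the complement of \(A_n\) in the set of all partitions of \(n\). Their number is therefore \(p(n)-p(n-1)\), which is the claim.

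As an alternative check, the same identity can be read off from generating functions. Multiplying \(1/(q;q)_\infty\) by \((1-q)\) gives \(\prod_{j\ge2}(1-q^j)^{-1}\), which is the generating function for partitions with no part \(1\). Its coefficient of \(q^n\) is \(p(n)-p(n-1)\).

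There is no real obstacle here. The only point needing care is the \(n=1\) case and the convention \(p(0)=1\).
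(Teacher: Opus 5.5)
Your proposal is correct and follows the paper's proof: both delete one part equal to \(1\) to biject partitions of \(n\) containing a \(1\) with partitions of \(n-1\), then take the complement. Your explicit treatment of \(n=1\) via \(p(0)=1\) and the generating-function check \((1-q)/(q;q)_\infty=\prod_{j\ge2}(1-q^j)^{-1}\) are valid additions that the paper leaves implicit.
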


\begin{proof}
Removing one part equal to \(1\) gives a bijection between
partitions of \(n\) containing a \(1\) and partitions of \(n-1\).
Thus exactly \(p(n-1)\) of the \(p(n)\) partitions of \(n\) contain
a \(1\), and the result follows.
\end{proof}

\begin{theorem}
For every \(n\ge3\),
\[
H(n+1)>H(n).
\]
\end{theorem}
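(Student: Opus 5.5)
We need $H(n+1)-H(n)=p(n+1)-p(n)-1>0$ for all $n\ge3$, that is, $p(n+1)-p(n)\ge2$. By the preceding lemma, $p(n+1)-p(n)$ equals the number of partitions of $n+1$ with no part equal to $1$. So the plan is to exhibit two distinct such partitions of $n+1$ whenever $n\ge3$, i.e. whenever $n+1\ge4$.

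Write $m=n+1\ge4$. The first partition is the one-part partition $(m)$, which contains no $1$. The second is $(m-2,2)$. Since $m\ge4$ we have $m-2\ge2$, so it contains no part equal to $1$; it is a genuine partition because its parts are listed in non-increasing order once $m-2\ge2$. It has two parts, so it differs from $(m)$. Hence the count in the lemma is at least $2$, giving $p(n+1)-p(n)\ge2$ and therefore $H(n+1)-H(n)\ge1>0$.

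The main point needing care is only the boundary: at $n=2$ (that is, $m=3$) the only $1$-free partition of $3$ is $(3)$, so the difference is exactly $1$ and $H(3)=H(2)$. This matches the listed values $0,0,0,1,\dots$ and explains why the theorem starts at $n=3$. The check $n=3$ gives $H(4)=1>0=H(3)$, consistent with the argument since $(4)$ and $(2,2)$ are the two $1$-free partitions of $4$. Nothing further should be needed, and no analytic input such as the Hardy--Ramanujan formula is required.
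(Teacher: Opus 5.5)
Your proof is correct and is essentially the paper's argument: both reduce the claim to \(p(n+1)-p(n)\ge 2\) via the lemma on partitions with no part equal to \(1\), and both exhibit the two partitions \((n+1)\) and \((n-1,2)\). Your boundary check at \(n=2\) is a small addition that explains why the statement begins at \(n=3\).
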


\begin{proof}
We have
\[
H(n+1)-H(n)
=
p(n+1)-p(n)-1.
\]
By the preceding lemma, \(p(n+1)-p(n)\) counts partitions of \(n+1\)
with no part equal to \(1\).  For \(n\ge3\), the two distinct
partitions
\[
(n+1)
\qquad\text{and}\qquad
(n-1,2)
\]
are both of this type.  Hence
\[
p(n+1)-p(n)\ge2,
\]
and therefore
\[
H(n+1)-H(n)\ge1.
\]
\end{proof}

The starting point is sharp, since
\[
H(1)=H(2)=H(3)=0.
\]

For \(n\ge3\), define the \emph{gap length}
\[
G(n):=H(n+1)-H(n)-1.
\]
Thus \(G(n)\) is the number of omitted integers strictly between the
consecutive attained values \(H(n)\) and \(H(n+1)\).  Since
\(H(n)=p(n)-n\),
\[
\boxed{
G(n)=p(n+1)-p(n)-2.
}
\]

For example,
\[
H(10)=32,\qquad H(11)=45,
\]
and hence the omitted block between them is
\[
33,34,\ldots,44,
\]
of length
\[
G(10)=12.
\]

Define
\[
M(N)
=
\#\{m\in\Z_{>0}:m\le H(N),\ m\notin\cH^+\}.
\]

\begin{theorem}
For \(N\ge4\),
\[
M(N)=p(N)-2N+3.
\]
\end{theorem}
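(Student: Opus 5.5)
The count $M(N)$ is the number of positive integers up to $H(N)$ minus the number of attained values in that interval. The plan is to use the strict monotonicity theorem to identify exactly which elements of $\cH^+$ lie in $[1,H(N)]$, and then subtract.

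First, we determine the positive attained values. Since $H(1)=H(2)=H(3)=0$ and $H(4)=1$, and $H$ is strictly increasing from $n=3$ onward by the preceding theorem, we have $H(n)\ge1$ exactly when $n\ge4$. So $\cH^+=\{H(n):n\ge4\}$ consists of positive integers, and the map $n\mapsto H(n)$ is injective on $n\ge4$.

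Second, we count the attained values up to $H(N)$. By monotonicity, for $n\ge4$ we have $H(n)\le H(N)$ if and only if $n\le N$. Hence
\[
\#\{m\in\cH^+: m\le H(N)\}=\#\{4,5,\ldots,N\}=N-3.
\]
The count is exact because of the injectivity noted above.

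Third, we subtract. The number of positive integers $m\le H(N)$ is $H(N)=p(N)-N$, which is valid since $H(N)\ge1$ for $N\ge4$. Therefore
\[
M(N)=H(N)-(N-3)=p(N)-2N+3.
\]

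As a cross-check, one can instead sum the gap lengths. The integers below $H(4)=1$ contribute nothing, so $M(N)=\sum_{n=4}^{N-1}G(n)$. Writing $G(n)=p(n+1)-p(n)-2$, this sum telescopes to $p(N)-p(4)-2(N-4)=p(N)-2N+3$, using $p(4)=5$. This agrees with the direct count; for $N=4$ the sum is empty and $M(4)=0$, matching $5-8+3=0$.

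There is no real obstacle here. The only point needing care is the first step: the attained values must be counted without repetition (which is where monotonicity from $n=3$ is used), and the value $0$, attained at $n=1,2,3$, must be excluded, which is why the count starts at $n=4$.
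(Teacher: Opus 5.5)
Your proof is correct and matches the paper's argument: strict monotonicity identifies the attained values up to $H(N)$ as exactly $H(4),\ldots,H(N)$, and subtracting their number $N-3$ from $H(N)$ gives the formula. Your telescoping cross-check is the paper's next corollary, which sums from $n=3$ instead of $n=4$; this makes no difference because $G(3)=p(4)-p(3)-2=0$.
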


\begin{proof}
By strict monotonicity, the positive attained values not exceeding
\(H(N)\) are precisely
\[
H(4),H(5),\ldots,H(N),
\]
of which there are \(N-3\).  Among the \(H(N)\) positive integers
not exceeding \(H(N)\), the remaining integers are omitted.
Therefore
\[
M(N)
=
H(N)-(N-3)
=
p(N)-2N+3.
\]
\end{proof}

\begin{corollary}
For \(N\ge4\),
\[
\sum_{n=3}^{N-1}G(n)
=
p(N)-2N+3.
\]
Equivalently,
\[
M(N)=\sum_{n=3}^{N-1}G(n).
\]
\end{corollary}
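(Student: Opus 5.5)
The corollary follows by telescoping the definition of the gap length, combined with the preceding theorem.

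For each \(n\ge3\) the definition gives \(G(n)=H(n+1)-H(n)-1\). Fix \(N\ge4\) and sum this identity over \(3\le n\le N-1\). The differences telescope, leaving
\[
\sum_{n=3}^{N-1}G(n)=H(N)-H(3)-(N-3).
\]
Since \(H(3)=p(3)-3=0\), this becomes \(H(N)-(N-3)=p(N)-N-N+3=p(N)-2N+3\), which is the first claim.

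For the equivalent form, note that the preceding theorem gives \(M(N)=p(N)-2N+3\) for \(N\ge4\). Comparing with the telescoped sum yields \(M(N)=\sum_{n=3}^{N-1}G(n)\).

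This identity can also be seen directly, which explains why it holds. By strict monotonicity of \(H\) for \(n\ge3\), the positive integers up to \(H(N)\) that are not attained are exactly the integers lying strictly between \(H(n)\) and \(H(n+1)\) for \(3\le n\le N-1\). These open intervals are pairwise disjoint. The first interval, \(0<m<H(4)=1\), is empty, consistent with \(G(3)=p(4)-p(3)-2=0\). Each interval contains exactly \(G(n)\) integers, and summing these counts gives \(M(N)\).

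There is no real obstacle here. The only care needed is the boundary term: the sum must start at \(n=3\), where \(H(3)=0\), so that the telescoped lower endpoint vanishes and the count begins at the positive integers.
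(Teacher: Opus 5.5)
Your proposal is correct and follows the paper's own argument: telescope $G(n)=H(n+1)-H(n)-1$ over $3\le n\le N-1$, use $H(3)=0$ to get $p(N)-2N+3$, and then compare with the preceding theorem's formula for $M(N)$. Your extra direct counting check, including that the first interval is empty because $G(3)=0$, is a consistent addition but is not needed.
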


\begin{proof}
Using
\[
G(n)=H(n+1)-H(n)-1,
\]
we obtain by telescoping
\[
\begin{aligned}
\sum_{n=3}^{N-1}G(n)
&=
H(N)-H(3)-(N-3)\\
&=
H(N)-(N-3)\\
&=
p(N)-2N+3.
\end{aligned}
\]
\end{proof}

We also obtain an exact decomposition of all omitted positive
integers.

\begin{theorem}
The complement of the positive range is
\[
\Z_{>0}\setminus\cH^+
=
\bigcup_{n\ge3}
\{H(n)+1,\ldots,H(n+1)-1\},
\]
where an interval is understood to be empty when
\(H(n+1)=H(n)+1\).  The intervals in this union are pairwise
disjoint.
\end{theorem}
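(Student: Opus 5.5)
The plan is to use the strict monotonicity theorem, $H(n+1)>H(n)$ for $n\ge3$, together with $H(3)=0$ and $H(n)\to\infty$, to partition $\Z_{>0}$ into the attained values and the open integer intervals between consecutive ones.

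\emph{Step 1: the sequence is unbounded.} Since $H(3)=0$ and $H(n+1)-H(n)\ge1$ for $n\ge3$, induction gives $H(n)\ge n-3$ for all $n\ge3$. Hence $H(n)\to\infty$.

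\emph{Step 2: each positive integer lies in exactly one step.} Fix $m\in\Z_{>0}$. Let $n\ge3$ be the largest index with $H(n)\le m$. This index exists because $H(3)=0\le m$ and, by Step 1, the set of such $n$ is finite. Maximality gives $H(n+1)>m$, so $H(n)\le m<H(n+1)$. This $n$ is unique, because the half-open intervals $[H(n),H(n+1))$ for $n\ge3$ are pairwise disjoint: strict monotonicity makes their left endpoints strictly increasing, and each interval ends exactly where the next one begins.

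\emph{Step 3: identify the complement.} Take $m$ and $n$ as in Step 2. There are two cases.
\begin{itemize}[leftmargin=2em]
\item If $m=H(n)$, then $n\ge4$ because $m>0=H(3)$, so $m\in\cH^+$.
\item If $H(n)<m<H(n+1)$, then $m$ is not a value of $H$ on $n\ge4$. Indeed, $H(k)\le H(n)<m$ for $3\le k\le n$, and $H(k)\ge H(n+1)>m$ for $k\ge n+1$.
\end{itemize}
So $m\notin\cH^+$ exactly when $m\in\{H(n)+1,\ldots,H(n+1)-1\}$ for some $n\ge3$. Conversely, every element of such a block is positive, since it exceeds $H(n)\ge0$, and by the same comparison it is not attained. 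This proves the stated equality.

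Disjointness of the blocks follows because each block sits inside $[H(n),H(n+1))$, and these intervals are pairwise disjoint by Step 2. A block is empty exactly when $H(n+1)=H(n)+1$, which matches the stated convention.

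No step is a real obstacle. The only point needing care is the index bookkeeping: $\cH^+$ starts at $n=4$, while the union starts at $n=3$. This works because $H(3)=0$ is not positive, so the block $\{1,\ldots,H(4)-1\}$ correctly covers the omitted values below $H(4)=1$; here it is empty.
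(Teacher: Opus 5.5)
Your proof is correct and follows the same route as the paper. Strict monotonicity of the integer sequence $H(3),H(4),\ldots$ gives $H(n+1)\ge H(n)+1$ and $H(n)\to\infty$, so every positive integer lies in a unique step $[H(n),H(n+1))$ and the omitted ones are exactly the interiors of these steps; you simply make explicit the bookkeeping the paper leaves implicit (the maximal index, the offset between $H(3)=0$ and the range starting at $n=4$, and positivity of block elements).
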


\begin{proof}
The sequence
\[
H(3),H(4),H(5),\ldots
\]
is strictly increasing and integer-valued.  Hence
\[
H(n+1)\ge H(n)+1,
\]
and in particular \(H(n)\to\infty\).  Every positive integer not
attained by \(H\) therefore lies strictly between a unique pair of
consecutive attained values.  This gives the stated decomposition.
\end{proof}

\section{Asymptotic size of the omitted blocks}

Set
\[
B:=\pi\sqrt{\frac23}.
\]
The Hardy--Ramanujan asymptotic gives
\[
p(n)
=
\frac{e^{B\sqrt n}}{4\sqrt3\,n}
\left(1+O(n^{-1/2})\right).
\]

For the first difference, however, the leading equivalence alone is
not sufficient to justify taking the difference of two neighboring
values.  We therefore use a shifted-quotient expansion directly.
The asymptotic theory for shifted quotients of the partition
function gives, for fixed shift \(1\),
\cite{BanerjeePauleRaduSchneider}
\[
\frac{p(n+1)}{p(n)}
=
1+\frac{\pi}{\sqrt{6n}}+O(n^{-1}).
\]
This is also consistent with the classical
Hardy--Ramanujan--Rademacher theory and with the literature on
finite differences of \(p(n)\); see \cite{Gupta,Rademacher}.

\begin{theorem}
As \(n\to\infty\),
\[
G(n)\sim
\frac{\pi}{\sqrt{6n}}p(n).
\]
Equivalently,
\[
G(n)
\sim
\frac{\pi}{12\sqrt2\,n^{3/2}}
\exp\left(\pi\sqrt{\frac{2n}{3}}\right).
\]
Moreover,
\[
\frac{G(n)}{H(n)}
\sim
\frac{\pi}{\sqrt{6n}}.
\]
\end{theorem}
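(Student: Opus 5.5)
We plan to derive everything from the boxed identity \(G(n)=p(n+1)-p(n)-2\) together with the shifted-quotient expansion quoted from \cite{BanerjeePauleRaduSchneider}. First we rewrite
\[
G(n)=p(n)\left(\frac{p(n+1)}{p(n)}-1\right)-2
=p(n)\left(\frac{\pi}{\sqrt{6n}}+O(n^{-1})\right)-2 .
\]
Dividing by \(\frac{\pi}{\sqrt{6n}}p(n)\) gives
\[
\frac{G(n)}{\frac{\pi}{\sqrt{6n}}p(n)}
=1+O(n^{-1/2})-\frac{2\sqrt{6n}}{\pi\,p(n)} .
\]
The last term tends to zero because \(p(n)\) grows faster than any power of \(n\); the Hardy--Ramanujan formula gives this, and so does the elementary bound \(p(n)\ge\) the number of partitions into parts \(\le 3\), which is of order \(n^2\). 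This proves the first asymptotic.

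For the explicit form we substitute the Hardy--Ramanujan asymptotic \(p(n)\sim e^{B\sqrt n}/(4\sqrt3\,n)\) into \(\frac{\pi}{\sqrt{6n}}p(n)\). The constant is
\[
\frac{\pi}{\sqrt6\cdot4\sqrt3}=\frac{\pi}{4\sqrt{18}}=\frac{\pi}{12\sqrt2},
\]
and the powers of \(n\) combine to \(n^{-3/2}\). Since asymptotic equivalence is transitive and multiplicative, this gives the second display.

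For the ratio we note that \(H(n)=p(n)-n=p(n)\bigl(1-n/p(n)\bigr)\sim p(n)\), again because \(n/p(n)\to0\). Then
\[
\frac{G(n)}{H(n)}=\frac{G(n)}{p(n)}\cdot\frac{p(n)}{H(n)}\sim\frac{\pi}{\sqrt{6n}}.
\]

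The only genuinely delicate point is the first step, namely justifying that \(p(n+1)-p(n)\) is asymptotic to \(\frac{\pi}{\sqrt{6n}}p(n)\). The leading Hardy--Ramanujan equivalence cannot be differenced directly, since its relative error \(O(n^{-1/2})\) is of the same order as the main term of the difference. That is exactly why we rely on the quoted quotient expansion with its \(O(n^{-1})\) error. If one preferred to avoid that citation, we would instead use the first Rademacher term, \(\frac{d}{dn}\) of \(\frac{1}{2\pi\sqrt2}\frac{d}{dn}\bigl(\sinh(C\lambda_n)/\lambda_n\bigr)\) with \(C=\pi\sqrt{2/3}\) and \(\lambda_n=\sqrt{n-1/24}\), whose error is exponentially smaller, and then apply the mean value theorem. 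Everything else is routine bookkeeping with \(n/p(n)\to0\).
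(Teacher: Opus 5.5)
Your proposal is correct and follows the paper's proof essentially step for step: rewrite $G(n)=p(n)\bigl(\frac{p(n+1)}{p(n)}-1\bigr)-2$ using the cited shifted-quotient expansion, absorb the $-2$ via $\sqrt n/p(n)\to0$, substitute Hardy--Ramanujan for the explicit form, and use $H(n)\sim p(n)$ for the ratio. Your elementary justification of $\sqrt n/p(n)\to0$ via partitions into parts $\le 3$ is a small addition the paper omits. One slip in your optional Rademacher aside: with $\sinh$ the first-term constant is $\frac{1}{\pi\sqrt2}$, not $\frac{1}{2\pi\sqrt2}$ (the latter goes with $e^{C\lambda_n}$ in the Hardy--Ramanujan form), but this does not affect your main argument.
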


\begin{proof}
From the shifted-quotient expansion,
\[
\begin{aligned}
p(n+1)-p(n)
&=
p(n)
\left(
\frac{\pi}{\sqrt{6n}}+O(n^{-1})
\right)\\
&=
\frac{\pi}{\sqrt{6n}}p(n)
\left(1+O(n^{-1/2})\right).
\end{aligned}
\]
Since
\[
\frac{p(n)}{\sqrt n}\to\infty,
\]
we have
\[
2=o\left(\frac{p(n)}{\sqrt n}\right).
\]
Using
\[
G(n)=p(n+1)-p(n)-2,
\]
we therefore obtain
\[
G(n)\sim
\frac{\pi}{\sqrt{6n}}p(n).
\]

Substitution of
\[
p(n)\sim
\frac{1}{4\sqrt3\,n}
e^{B\sqrt n}
\]
gives
\[
G(n)
\sim
\frac{\pi}{12\sqrt2\,n^{3/2}}
e^{B\sqrt n},
\]
which is the second assertion.

Finally,
\[
H(n)=p(n)-n\sim p(n),
\]
so
\[
\frac{G(n)}{H(n)}
\sim
\frac{\pi}{\sqrt{6n}}.
\]
\end{proof}

\begin{corollary}
The omitted blocks have unbounded absolute length:
\[
G(n)\to\infty.
\]
At the same time,
\[
\frac{G(n)}{H(n)}\to0.
\]
Consequently,
\[
\frac{H(n+1)}{H(n)}\to1.
\]
\end{corollary}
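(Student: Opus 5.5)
The corollary follows directly from the preceding theorem, which gives
\[
G(n)\sim\frac{\pi}{\sqrt{6n}}\,p(n)
\qquad\text{and}\qquad
\frac{G(n)}{H(n)}\sim\frac{\pi}{\sqrt{6n}} .
\]
I would derive the three assertions in order.

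\emph{Unboundedness of the gaps.} Since \(p(n)\sim\frac{1}{4\sqrt3\,n}e^{B\sqrt n}\), the quantity \(p(n)/\sqrt n\) tends to infinity. This is the same fact already used in the proof of the theorem: the exponential beats any power of \(n\). Combining it with the first asymptotic gives \(G(n)\to\infty\). Alternatively, one can use the explicit form
\[
G(n)\sim \frac{\pi}{12\sqrt2\,n^{3/2}}\,e^{B\sqrt n},
\]
whose right-hand side visibly diverges.

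\emph{Relative smallness of the gaps.} The relation \(\frac{G(n)}{H(n)}\sim\frac{\pi}{\sqrt{6n}}\) means that the ratio \(\frac{G(n)\sqrt{6n}}{\pi H(n)}\) tends to \(1\). Hence \(\frac{G(n)}{H(n)}\) equals \(\frac{\pi}{\sqrt{6n}}(1+o(1))\), and this tends to \(0\).

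\emph{Consecutive ratio.} From the definition \(G(n)=H(n+1)-H(n)-1\), valid for \(n\ge3\) where \(H(n)>0\) once \(n\ge4\), we have
\[
\frac{H(n+1)}{H(n)}=1+\frac{G(n)}{H(n)}+\frac{1}{H(n)} .
\]
The middle term tends to \(0\) by the previous step. The last term tends to \(0\) because \(H(n)=p(n)-n\to\infty\); this follows either from \(H(n)\sim p(n)\) or from the strict monotonicity theorem, which shows that \(H\) increases by at least \(1\) at each step. Hence \(H(n+1)/H(n)\to1\).

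There is no real obstacle here. The only point needing care is remembering the extra \(+1\) in the definition of \(G\), and noting that \(H(n)\to\infty\) so that \(1/H(n)\) vanishes.
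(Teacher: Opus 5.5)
Your proposal is correct and takes the same route as the paper. The paper states this corollary without a separate proof, as an immediate consequence of $G(n)\sim\frac{\pi}{\sqrt{6n}}\,p(n)$ and $G(n)/H(n)\sim\frac{\pi}{\sqrt{6n}}$. Your identity $\frac{H(n+1)}{H(n)}=1+\frac{G(n)}{H(n)}+\frac{1}{H(n)}$ for $n\ge4$, together with $H(n)\to\infty$, is exactly the step the paper leaves implicit.
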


Thus consecutive elements of the range become increasingly far
apart in absolute terms, while becoming increasingly close relative
to their magnitude.

\section{Counting the attained and omitted values}

Define
\[
I(x)
=
\#\{m\in\cH^+:m\le x\}.
\]
If \(N=N(x)\) is determined by
\[
H(N)\le x<H(N+1),
\]
then
\[
I(x)=N-3.
\]

We shall invert the asymptotic growth of \(H(N)\).

From the Hardy--Ramanujan--Rademacher theory,
\[
p(n)
=
\frac{e^{B\sqrt n}}{4\sqrt3\,n}
\left(1+O(n^{-1/2})\right).
\]
Since
\[
\frac{n}{p(n)}
\]
is exponentially small in \(\sqrt n\), subtraction of \(n\) does
not affect this relative error.  Hence
\[
H(n)
=
\frac{e^{B\sqrt n}}{4\sqrt3\,n}
\left(1+O(n^{-1/2})\right).
\]
Taking logarithms,
\[
\boxed{
\log H(n)
=
B\sqrt n-\log n-\log(4\sqrt3)
+O(n^{-1/2}).
}
\]

\begin{lemma}
As \(n\to\infty\),
\[
\log H(n+1)-\log H(n)
=
O(n^{-1/2}).
\]
\end{lemma}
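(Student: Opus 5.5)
The plan is to derive the lemma directly from the boxed expansion
\[
\log H(n)=B\sqrt n-\log n-\log(4\sqrt3)+O(n^{-1/2}),
\]
applied at $n$ and at $n+1$ and subtracted. There are three contributions to control. The constant $\log(4\sqrt3)$ cancels exactly. For the main term, I will use
\[
B\bigl(\sqrt{n+1}-\sqrt n\bigr)=\frac{B}{\sqrt{n+1}+\sqrt n}\le\frac{B}{2\sqrt n}=O(n^{-1/2}).
\]
For the logarithmic term,
\[
\log(n+1)-\log n=\log\bigl(1+\tfrac1n\bigr)\le\tfrac1n=O(n^{-1/2}).
\]
Finally, the two error terms $O(n^{-1/2})$ and $O((n+1)^{-1/2})$ are both $O(n^{-1/2})$, so their difference is as well. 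Adding these bounds gives the claim.

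The only point needing care is that the error term in the boxed formula is uniform. It comes from the Hardy--Ramanujan relative error $1+O(n^{-1/2})$, and that holds with an absolute implied constant for all large $n$. So evaluating it at $n+1$ yields $O((n+1)^{-1/2})\subseteq O(n^{-1/2})$ with no loss.

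As a cross-check, and an alternative route if one prefers not to difference error terms, I can use the shifted-quotient expansion quoted earlier. It gives $p(n+1)/p(n)=1+\pi/\sqrt{6n}+O(n^{-1})$. Since $H(n)=p(n)(1+O(n/p(n)))$ with $n/p(n)$ exponentially small, we get
\[
\frac{H(n+1)}{H(n)}=1+\frac{\pi}{\sqrt{6n}}+O(n^{-1}).
\]
Then $\log(1+u)=O(u)$ for small $u$ gives the same $O(n^{-1/2})$ bound, and in fact identifies the leading term as $\pi/\sqrt{6n}=B/(2\sqrt n)$, consistent with the first computation.

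I expect no genuine obstacle; the only step needing attention is the uniformity remark about the error term.
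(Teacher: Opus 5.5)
Your proposal is correct and matches the paper's proof: subtract the boxed expansion of $\log H$ at $n$ and $n+1$, use $B(\sqrt{n+1}-\sqrt n)=O(n^{-1/2})$ and $\log(1+1/n)=O(n^{-1})$, and absorb the two error terms. Your alternative via the shifted quotient $p(n+1)/p(n)=1+\pi/\sqrt{6n}+O(n^{-1})$ is also valid. It has the further benefit of identifying the leading term $B/(2\sqrt n)$, which the differencing argument cannot detect because its error terms are of the same order.
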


\begin{proof}
Subtracting the preceding logarithmic expansions gives
\[
\begin{aligned}
\log H(n+1)-\log H(n)
={}&
B(\sqrt{n+1}-\sqrt n)\\
&-\log\left(1+\frac1n\right)
+O(n^{-1/2}).
\end{aligned}
\]
Now
\[
\sqrt{n+1}-\sqrt n=O(n^{-1/2})
\]
and
\[
\log\left(1+\frac1n\right)=O(n^{-1}),
\]
which proves the claim.
\end{proof}

Consequently, if
\[
H(N)\le x<H(N+1),
\]
then
\[
\boxed{
\log x=\log H(N)+O(N^{-1/2}).
}
\]

We now obtain the main inverse asymptotic.

\begin{theorem}
As \(x\to\infty\),
\[
\begin{aligned}
I(x)
={}&
\frac{3}{2\pi^2}(\log x)^2\\
&+
\frac6{\pi^2}(\log x)(\log\log x)\\
&+
\frac3{\pi^2}
\log\left(\frac{6\sqrt3}{\pi^2}\right)\log x\\
&+
O\bigl((\log\log x)^2\bigr).
\end{aligned}
\]
In particular,
\[
I(x)\sim
\frac{3}{2\pi^2}(\log x)^2.
\]
\end{theorem}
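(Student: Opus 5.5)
Let $L:=\log x$ and let $N=N(x)$ be defined by $H(N)\le x<H(N+1)$, so that $I(x)=N-3$. The plan is to solve the boxed relation for $\sqrt N$ in terms of $L$ and then square the result. Combining the boxed expansion of $\log H(N)$ with the consequence of the preceding lemma ($\log x=\log H(N)+O(N^{-1/2})$), we have
\[
L=B\sqrt N-2\log\sqrt N-\log(4\sqrt3)+O(N^{-1/2}).
\]
Writing $u:=\sqrt N$, this says $Bu=L+2\log u+c+O(u^{-1})$ with $c:=\log(4\sqrt3)$.

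The first step is a crude bound. Since $\log u=o(u)$, we get $Bu\sim L$, so $u\asymp L$ and $\log u=\log L-\log B+O(\log L/L)$.

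The second step bootstraps this into the equation for $u$. It gives
\[
u=\frac{L}{B}+\frac{2\log L}{B}+\frac{c-2\log B}{B}+O\!\left(\frac{\log L}{L}\right).
\]
Squaring then yields
\[
N=\frac{L^2}{B^2}+\frac{4L\log L}{B^2}+\frac{2(c-2\log B)}{B^2}L+O\bigl((\log L)^2\bigr).
\]
To see the size of the error: the cross term $(2\log L/B)^2$ is $O((\log L)^2)$, the term $2\cdot(L/B)\cdot O(\log L/L)=O(\log L)$ is smaller, and the shift $I(x)=N-3$ is absorbed as well.

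The third step checks the constants. We have $B^2=2\pi^2/3$, so $1/B^2=3/(2\pi^2)$ and $4/B^2=6/\pi^2$. For the linear coefficient, $2(c-2\log B)/B^2=\frac{3}{\pi^2}\log\bigl(4\sqrt3/B^2\bigr)=\frac{3}{\pi^2}\log\bigl(6\sqrt3/\pi^2\bigr)$, which matches the statement. The asymptotic $I(x)\sim\frac{3}{2\pi^2}L^2$ is then immediate.

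The main obstacle is the bookkeeping of the error terms through the inversion. One has to make sure the $O(N^{-1/2})$ error and the replacement of $\log u$ by $\log(L/B)$ contribute only $O(\log L/L)$ to $u$, and hence only $O((\log L)^2)$ to $N$. I would handle this by carrying out the second iteration explicitly. Writing $\log u=\log(L/B)+\log\bigl(1+O(\log L/L)\bigr)$ shows that the correction to $u$ is $O(\log L/L)$, which is all that is needed.
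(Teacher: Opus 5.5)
Your proposal is correct and follows essentially the same route as the paper. Both arguments combine the expansion of $\log H(N)$ with $\log x=\log H(N)+O(N^{-1/2})$, invert $B\sqrt N-2\log\sqrt N-c=L$ to accuracy $O(\log L/L)$, and square, with the same constant $c-2\log B=\log(6\sqrt3/\pi^2)$. The only cosmetic difference is how that accuracy is obtained: you substitute $\log u=\log(L/B)+O(\log L/L)$ back into the equation, whereas the paper compares $f(y)$ with $f(y_0)$ for the explicit candidate $y_0=(L+2\log L+C)/B$ and applies the mean value theorem to $f(t)=Bt-2\log t-c$.
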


\begin{proof}
Choose \(N=N(x)\) so that
\[
H(N)\le x<H(N+1).
\]
Then
\[
I(x)=N-3.
\]

Set
\[
L=\log x,\qquad
y=\sqrt N,\qquad
c=\log(4\sqrt3).
\]
By the preceding lemma and the logarithmic expansion for \(H(N)\),
\[
L
=
By-2\log y-c+O(y^{-1}).
\]
In particular,
\[
y\asymp L.
\]

Define
\[
C:=c-2\log B.
\]
Since
\[
B^2=\frac{2\pi^2}{3},
\]
we have
\[
\boxed{
C
=
\log\left(\frac{6\sqrt3}{\pi^2}\right).
}
\]

Consider the approximation
\[
y_0
=
\frac1B(L+2\log L+C).
\]
Factoring out \(L\) gives
\[
y_0
=
\frac LB
\left(
1+\frac{2\log L+C}{L}
\right),
\]
and hence
\[
\log y_0
=
\log L-\log B
+
O\left(\frac{\log L}{L}\right).
\]

Let
\[
f(t)=Bt-2\log t-c.
\]
The true solution satisfies
\[
f(y)=L+O(L^{-1}),
\]
because \(y\asymp L\).  On the other hand, the definition of \(y_0\)
and the preceding estimate give
\[
f(y_0)
=
L+O\left(\frac{\log L}{L}\right).
\]
Therefore
\[
f(y)-f(y_0)
=
O\left(\frac{\log L}{L}\right).
\]

For \(t\asymp L\),
\[
f'(t)=B-\frac2t=B+O(L^{-1}),
\]
so \(f'(t)\) is bounded away from zero for sufficiently large \(L\).
The mean value theorem therefore yields
\[
y-y_0
=
O\left(\frac{\log L}{L}\right).
\]
Thus
\[
\boxed{
y
=
\frac1B(L+2\log L+C)
+
O\left(\frac{\log L}{L}\right).
}
\]

Squaring,
\[
N
=
\frac1{B^2}(L+2\log L+C)^2
+
O(\log L).
\]
Expanding the square and absorbing the lower-order terms gives
\[
N
=
\frac1{B^2}L^2
+
\frac4{B^2}L\log L
+
\frac{2C}{B^2}L
+
O((\log L)^2).
\]
Since
\[
\frac1{B^2}
=
\frac3{2\pi^2},
\]
we obtain
\[
N
=
\frac3{2\pi^2}L^2
+
\frac6{\pi^2}L\log L
+
\frac3{\pi^2}CL
+
O((\log L)^2).
\]

Finally,
\[
L=\log x,\qquad
\log L=\log\log x,
\]
and
\[
C=
\log\left(\frac{6\sqrt3}{\pi^2}\right).
\]
Since \(I(x)=N-3\), the constant \(-3\) is absorbed into the error
term, giving
\[
\begin{aligned}
I(x)
={}&
\frac{3}{2\pi^2}(\log x)^2
+\frac6{\pi^2}(\log x)(\log\log x)\\
&+
\frac3{\pi^2}
\log\left(\frac{6\sqrt3}{\pi^2}\right)\log x
+
O((\log\log x)^2).
\end{aligned}
\]
\end{proof}

\begin{corollary}
The positive range \(\cH^+\) has natural density zero:
\[
\lim_{x\to\infty}\frac{I(x)}x=0.
\]
\end{corollary}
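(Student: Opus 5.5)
The plan is to derive the corollary directly from the inverse asymptotic in the preceding theorem, which gives
\[
I(x)\sim\frac{3}{2\pi^2}(\log x)^2 .
\]
In particular there is a constant \(K\) and a threshold \(x_0\) such that \(I(x)\le K(\log x)^2\) for all \(x\ge x_0\). Dividing by \(x\), we get
\[
0\le\frac{I(x)}{x}\le K\frac{(\log x)^2}{x}\qquad (x\ge x_0),
\]
and \((\log x)^2/x\to0\), for example by l'H\^opital's rule applied twice or by the substitution \(x=e^t\), which turns it into \(t^2e^{-t}\to0\). The squeeze principle then gives \(\lim_{x\to\infty}I(x)/x=0\), which is exactly natural density zero for \(\cH^+\).

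If one prefers not to lean on the full three-term expansion, the same conclusion follows from cruder information. By the strict monotonicity theorem, \(I(x)=N-3\) with \(H(N)\le x\). The logarithmic expansion \(\log H(N)=B\sqrt N+O(\log N)\) then gives \(\sqrt N\le (\log x)/B+O(\log\log x)\), hence \(N=O((\log x)^2)\), and we conclude as above.

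There is no real obstacle here. The only point that needs care is that the asymptotic is used for real \(x\to\infty\) and not merely along the values \(x=H(N)\). This is already built into the theorem's proof, since \(N(x)\) is defined for every real \(x\ge H(4)\).
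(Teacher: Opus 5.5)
Your proof is correct and matches the paper's: the paper also deduces $I(x)=O((\log x)^2)$ from the preceding theorem and concludes from $(\log x)^2/x\to0$. Your alternative route through the cruder bound $\log H(N)=B\sqrt N+O(\log N)$ is also sound, and has the advantage of not needing the full three-term expansion.
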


\begin{proof}
The theorem gives
\[
I(x)=O((\log x)^2),
\]
and
\[
\frac{(\log x)^2}{x}\to0.
\]
\end{proof}

To avoid confusion with Big-\(O\) notation, define the complementary
counting function by
\[
C(x)
=
\#\{m\in\Z_{>0}:m\le x,\ m\notin\cH^+\}.
\]

\begin{corollary}
As \(x\to\infty\),
\[
\begin{aligned}
C(x)
={}&
x-\frac3{2\pi^2}(\log x)^2\\
&-\frac6{\pi^2}(\log x)(\log\log x)\\
&-\frac3{\pi^2}
\log\left(\frac{6\sqrt3}{\pi^2}\right)\log x\\
&+
O((\log\log x)^2).
\end{aligned}
\]
In particular,
\[
C(x)\sim x,
\]
so the omitted positive integers have natural density one.
\end{corollary}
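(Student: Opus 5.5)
The corollary follows from the main inverse asymptotic for \(I(x)\) together with an exact identity linking \(C(x)\) and \(I(x)\). I will first show that for every real \(x\ge1\),
\[
C(x)=\lfloor x\rfloor-I(x).
\]
The reason is that the positive integers \(m\le x\) number exactly \(\lfloor x\rfloor\), and each one either lies in \(\cH^+\) or does not. This needs no structural input beyond the definitions: \(\cH^+\) is a subset of \(\Z_{>0}\) because \(H(n)\ge1\) for \(n\ge4\), as the strict monotonicity theorem and \(H(4)=1\) show. Equivalently, for \(H(N)\le x<H(N+1)\) one can check this against the formula \(M(N)=p(N)-2N+3\) together with the omitted-block decomposition, but the direct counting identity is cleaner.

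Next I replace \(\lfloor x\rfloor\) by \(x\). This costs an error of at most \(1\), which is absorbed into \(O((\log\log x)^2)\).

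I then substitute the three-term expansion of \(I(x)\) from the preceding theorem and negate it. The \(O\)-term keeps its form under the sign change, which gives the displayed expansion of \(C(x)\).

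For the final claims, the subtracted terms are all \(O((\log x)^2)=o(x)\), so \(C(x)=x+o(x)\). That is \(C(x)\sim x\), and hence \(C(x)/x\to1\), which is natural density one. This also follows directly from the density-zero corollary for \(\cH^+\).

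The only point needing any care is the floor function: \(x\) is real, and the statement writes \(x\) rather than \(\lfloor x\rfloor\). This is harmless because the error term is unbounded, so there is no genuine obstacle here; the substantive work was done in the inversion theorem.
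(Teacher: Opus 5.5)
Your proposal is correct and follows the paper's proof exactly: it uses the identity \(C(x)=\lfloor x\rfloor-I(x)\), replaces \(\lfloor x\rfloor\) by \(x+O(1)\), and substitutes the three-term expansion of \(I(x)\). Your check that \(\cH^+\subseteq\Z_{>0}\) (since \(H(4)=1\) and \(H\) is strictly increasing) is a small detail the paper leaves implicit.
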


\begin{proof}
Exactly,
\[
C(x)=\lfloor x\rfloor-I(x).
\]
Since
\[
\lfloor x\rfloor=x+O(1),
\]
the result follows from the preceding theorem.

At an attained value \(x=H(N)\), this is consistent with the exact
formula of Section~3:
\[
I(H(N))=N-3
\]
and
\[
\begin{aligned}
C(H(N))
&=
H(N)-N+3\\
&=
p(N)-2N+3.
\end{aligned}
\]
\end{proof}

\section{A fixed Durfee-square threshold}

We now consider a natural generalization.  For a fixed integer
\(r\ge2\), define
\[
H_r(n)
=
\#\{\lambda\vdash n:\Dur(\lambda)\ge r\}.
\]
Thus
\[
H_2(n)=H(n).
\]

The classical Durfee-square decomposition gives
\[
\frac1{(q;q)_\infty}
=
\sum_{d\ge0}
\frac{q^{d^2}}{(q;q)_d^2}.
\]
Indeed, a partition whose Durfee square has side exactly \(d\)
consists of a \(d\times d\) square, a partition to its right with at
most \(d\) parts, and a partition below it whose largest part is at
most \(d\).  Hence its generating function is
\[
\frac{q^{d^2}}{(q;q)_d^2}.
\]

It follows that
\[
\boxed{
\sum_{n\ge0}H_r(n)q^n
=
\sum_{d\ge r}
\frac{q^{d^2}}{(q;q)_d^2}.
}
\]
Equivalently,
\[
\sum_{n\ge0}H_r(n)q^n
=
\frac1{(q;q)_\infty}
-
\sum_{d=0}^{r-1}
\frac{q^{d^2}}{(q;q)_d^2}.
\]

\begin{proposition}
For fixed \(r\ge2\),
\[
H_r(n)=0\qquad(0\le n<r^2),
\]
\[
H_r(r^2)=1,
\]
and
\[
H_r(n)>H_r(n-1)
\qquad(n\ge r^2).
\]
\end{proposition}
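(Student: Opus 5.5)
The plan is to argue directly from the definition of the Durfee square, keeping the combinatorial style of Section~3. Write \(\Dur(\lambda)\) for the largest \(d\) with \(\lambda_d\ge d\). Equivalently, the Ferrers diagram contains a \(d\times d\) square in its upper-left corner.

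\emph{Vanishing and the value at \(r^2\).} If \(\Dur(\lambda)\ge r\), the diagram contains an \(r\times r\) square, so \(|\lambda|\ge r^2\). This gives \(H_r(n)=0\) for \(0\le n<r^2\). If \(|\lambda|=r^2\) and \(\Dur(\lambda)\ge r\), the diagram is exactly the \(r\times r\) square, i.e.\ \(\lambda=(r^r)\). Hence \(H_r(r^2)=1\). The same two facts can be read off from the boxed generating function \(\sum_{d\ge r}q^{d^2}/(q;q)_d^2\), whose lowest term is \(q^{r^2}\) with coefficient \(1\).

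\emph{Strict monotonicity.} This mirrors the lemma on \(p(n)-p(n-1)\). Define a map from partitions of \(n-1\) with \(\Dur\ge r\) to partitions of \(n\) by appending a part equal to \(1\). The key check is that this preserves the condition \(\Dur\ge r\). Appending a part \(1\) does not change \(\lambda_1,\dots,\lambda_\ell\), so it cannot lower the Durfee side. It could only raise the side to \(d+1\) if the new part, sitting in row \(\ell+1\), satisfied \(1\ge d+1\), which cannot happen when \(d\ge r\ge 2\). So the map lands in \(\{\lambda\vdash n:\Dur(\lambda)\ge r\}\). It is injective because removing one part \(1\) inverts it. Hence \(H_r(n)\ge H_r(n-1)\). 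The image is exactly the set of partitions of \(n\) with \(\Dur\ge r\) that contain a part \(1\).

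To get strict inequality for \(n\ge r^2\), it suffices to exhibit one partition of \(n\) with \(\Dur\ge r\) and no part equal to \(1\). Take
\[
\lambda=(r+(n-r^2),\,r^{\,r-1}),
\]
which has \(r\) parts, sums to \(n\), and has \(\lambda_r=r\), so \(\Dur(\lambda)=r\). Its smallest part is \(r\ge2\), so it has no part \(1\) and lies outside the image. Therefore \(H_r(n)\ge H_r(n-1)+1\) for all \(n\ge r^2\). At \(n=r^2\) this is consistent with the values \(H_r(r^2-1)=0\) and \(H_r(r^2)=1\).

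The only step requiring care is the Durfee-preservation claim for appending a \(1\). Once it is stated via \(\lambda_d\ge d\), the argument is routine. I expect no genuine obstacle.
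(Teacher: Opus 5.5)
Your proposal is correct and follows essentially the same route as the paper: the box count for vanishing below \(r^2\), the unique partition \((r^r)\) at \(n=r^2\), the add/remove-a-part-\(1\) correspondence, and the witness \((r+(n-r^2),r^{r-1})\), which has no part equal to \(1\). You additionally verify that appending a \(1\) preserves \(\Dur\ge r\) (using \(r\ge2\)), a check the paper leaves implicit, and you rightly observe that the strict inequality needs only the injection, not the full bijection.
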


\begin{proof}
A partition whose Durfee square has side at least \(r\) contains at
least \(r^2\) boxes.  Hence
\[
H_r(n)=0
\qquad(n<r^2).
\]
At \(n=r^2\), the unique such partition is
\[
(r^r)=(\underbrace{r,r,\ldots,r}_{r\text{ times}}),
\]
so
\[
H_r(r^2)=1.
\]

For the strict increase, removing one part equal to \(1\) gives a
bijection between partitions counted by \(H_r(n)\) that contain a
\(1\) and partitions counted by \(H_r(n-1)\).  Consequently,
\[
H_r(n)-H_r(n-1)
\]
counts partitions \(\lambda\vdash n\) satisfying
\[
\Dur(\lambda)\ge r
\]
and containing no part equal to \(1\).

Write
\[
n=r^2+s,\qquad s\ge0.
\]
The partition
\[
(r+s,\underbrace{r,\ldots,r}_{r-1\text{ times}})
\]
has size \(n\), has Durfee square of side at least \(r\), and
contains no part equal to \(1\).  Therefore
\[
H_r(n)-H_r(n-1)\ge1,
\]
which proves strict monotonicity.
\end{proof}

For clarity, let
\[
D_d(n)
=
\#\{\lambda\vdash n:\Dur(\lambda)=d\}.
\]
Then
\[
\sum_{n\ge0}D_d(n)q^n
=
\frac{q^{d^2}}{(q;q)_d^2}.
\]
For fixed \(d\), this is a rational function whose poles are roots
of unity.  Consequently its coefficients grow at most polynomially:
there exists an exponent \(A_d\) such that
\[
D_d(n)=O_d(n^{A_d}).
\]

Since \(r\) is fixed,
\[
\begin{aligned}
H_r(n)
&=
p(n)-\sum_{d=0}^{r-1}D_d(n)\\
&=
p(n)-O_r(n^{A_r})
\end{aligned}
\]
for a suitable exponent \(A_r\).  The Hardy--Ramanujan asymptotic
then gives
\[
\boxed{
H_r(n)\sim p(n)
}
\qquad(n\to\infty)
\]
for every fixed \(r\ge2\).

Define the positive range
\[
\cH_r^+
=
\{H_r(n):n\ge r^2\}
\]
and its counting function
\[
I_r(x)
=
\#\{m\in\cH_r^+:m\le x\}.
\]

By strict monotonicity,
\[
H_r(N)\le x<H_r(N+1)
\quad\Longrightarrow\quad
\boxed{
I_r(x)=N-r^2+1.
}
\]

\begin{theorem}
For every fixed \(r\ge2\),
\[
I_r(x)
\sim
\frac{3}{2\pi^2}(\log x)^2
\qquad(x\to\infty).
\]
Consequently, \(\cH_r^+\) has natural density zero.
\end{theorem}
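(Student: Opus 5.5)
The plan is to mimic the inversion argument used for \(I(x)\), but only to leading order. Only the first-order asymptotic of \(\log H_r(N)\) is needed, and the boxed relation \(H_r(n)\sim p(n)\) supplies it directly.

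\textbf{Step 1: leading-order logarithmic asymptotic.} From \(H_r(n)=p(n)-O_r(n^{A_r})\) and the Hardy--Ramanujan formula, we have \(H_r(n)=p(n)(1+o(1))\). Taking logarithms gives
\[
\log H_r(n)=B\sqrt n-\log n-\log(4\sqrt3)+o(1)=B\sqrt n\,(1+o(1)),
\]
where \(B=\pi\sqrt{2/3}\).

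\textbf{Step 2: passing from \(H_r(N)\) to \(x\).} For large \(x\), choose \(N=N(x)\ge r^2\) with \(H_r(N)\le x<H_r(N+1)\). This is possible by the strict monotonicity proposition, and the boxed identity then gives \(I_r(x)=N-r^2+1\). As \(x\to\infty\) we have \(N\to\infty\), and
\[
\log H_r(N)\le \log x<\log H_r(N+1).
\]
By Step 1, both bounds equal \(B\sqrt N\,(1+o(1))\), since \(\sqrt{N+1}\sim\sqrt N\). Hence \(\log x\sim B\sqrt N\). No precise gap estimate is required here, unlike the lemma on \(\log H(n+1)-\log H(n)\); the sandwich suffices.

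\textbf{Step 3: inverting.} Step 2 gives \(\sqrt N\sim (\log x)/B\). Squaring yields
\[
N\sim \frac{(\log x)^2}{B^2}=\frac{3}{2\pi^2}(\log x)^2.
\]
Since \(r\) is fixed, \(I_r(x)=N-r^2+1\sim N\), which proves the asymptotic. The density statement then follows as in the earlier corollary, because \((\log x)^2/x\to0\).

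\textbf{Main obstacle.} The only genuinely delicate point is justifying that the polynomial-growth bound \(D_d(n)=O_d(n^{A_d})\) makes the subtracted terms negligible against \(p(n)\). This holds because \(p(n)\) grows like \(e^{B\sqrt n}\), which dominates every fixed power of \(n\). The paper already asserts this when deriving \(H_r(n)\sim p(n)\), so I will invoke that result rather than reprove it.
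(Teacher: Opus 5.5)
Your proposal is correct and follows essentially the same route as the paper. You deduce $\log H_r(n)=B\sqrt n\,(1+o(1))$ from $H_r(n)\sim p(n)$, sandwich $\log x$ between $\log H_r(N)$ and $\log H_r(N+1)$ to get $\log x\sim B\sqrt N$, and invert using $I_r(x)=N-r^2+1$; your explicit sandwich just spells out what the paper compresses into ``the leading inversion therefore gives $\log x\sim B\sqrt N$.''
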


\begin{proof}
For fixed \(r\),
\[
H_r(n)=p(n)(1+o(1)),
\]
and the excluded contribution is exponentially small in \(\sqrt n\)
relative to \(p(n)\).  Hence
\[
\log H_r(n)
=
B\sqrt n-\log n+O_r(1).
\]
If
\[
H_r(N)\le x<H_r(N+1),
\]
the leading inversion therefore gives
\[
\log x\sim B\sqrt N.
\]
Thus
\[
N
\sim
\frac{1}{B^2}(\log x)^2
=
\frac{3}{2\pi^2}(\log x)^2.
\]
Since
\[
I_r(x)=N-r^2+1
\]
and \(r\) is fixed, the asserted asymptotic follows.  Dividing by
\(x\) then gives density zero.
\end{proof}

\begin{remark}
Because, for fixed \(r\), the excluded Durfee classes are
exponentially small in \(\sqrt n\) relative to \(p(n)\), the
asymptotic inversion above can be refined further.  In particular,
one may obtain lower-order terms analogous to those in the
three-term expansion for \(I(x)\).  We do not pursue these
refinements here.
\end{remark}

\section{Concluding remarks}

We have studied the range of the partition-derived sequence
\[
H(n)=p(n)-n
\]
from both exact and asymptotic points of view.

The elementary identity
\[
G(n)=p(n+1)-p(n)-2
\]
connects the omitted blocks in the range directly with the first
difference of the partition function.  Analytically,
\[
G(n)
\sim
\frac{\pi}{\sqrt{6n}}p(n),
\]
so the omitted blocks become arbitrarily long, whereas
\[
\frac{G(n)}{H(n)}\to0.
\]
Thus the range exhibits the somewhat contrasting phenomena of
increasing absolute separation and decreasing relative separation.

Globally, the range-counting function satisfies
\[
\begin{aligned}
I(x)
={}&
\frac{3}{2\pi^2}(\log x)^2
+\frac6{\pi^2}(\log x)(\log\log x)\\
&+
\frac3{\pi^2}
\log\left(\frac{6\sqrt3}{\pi^2}\right)\log x
+
O((\log\log x)^2).
\end{aligned}
\]
This quantifies the extreme sparsity of the attained values and
shows that their natural density is zero.

More generally, for every fixed \(r\ge2\), the range of the
Durfee-threshold counting function \(H_r(n)\) has the same
leading-order sparsity:
\[
I_r(x)
\sim
\frac{3}{2\pi^2}(\log x)^2.
\]

Several questions remain natural.  One may seek further terms in
the inverse expansion for \(I(x)\), investigate congruence
properties of the attained and omitted values, or study the
Durfee-threshold problem when \(r=r(n)\) is allowed to grow with
\(n\).  In the latter setting the fixed-\(r\) polynomial-error
argument no longer applies directly, and the distribution of
Durfee-square sizes becomes relevant.

More broadly, it would be interesting to investigate analogous
range and omitted-value problems for other partition-derived
counting sequences.

\section{Acknowledgements}

The second author carried out this work during her summer internship at SRM University-AP from June 16 to July 24, 2026. The authors gratefully acknowledge SRM University-AP for providing the research facilities and a supportive research environment that facilitated this work.


\end{document}